\newcommand{\version}{April 26, 2016}   
\theoremstyle{plain}
\newtheorem{thm}{THEOREM}[section]
\newtheorem{cl}[thm]{COROLLARY}
\newtheorem{prop}[thm]{PROPOSITION}
\newtheorem{conjecture}[thm]{Conjecture}
\theoremstyle{definition}
\newtheorem{defi}[thm]{DEFINITION}
\theoremstyle{definition}
\newtheorem{remark}[thm]{Remark}
\newcommand{\upchi}{\raise1pt\hbox{$\chi$}}
\newcommand{\C}{{\mathord{\mathbb C}}}
\newcommand{\cH}{{\mathcal{H} }}
\newcommand{\bk}{{\rangle\langle}}
\newcommand{\tr}{{\rm Tr}}
\renewcommand{\|}{{\Vert}}
\renewcommand{\bk}{{\rangle\langle}}
\newcommand{\ef}{{\rm E}_{\rm f}}
\newcommand{\es}{{\rm E}_{\rm sq}}
\numberwithin{equation}{section}
\def\H{\mathcal{H}}
\def\tr{{\rm Tr}}
\begin{document}

\title{Entropy and entanglement bounds for reduced density matrices of
fermionic states}
\author{\vspace{5pt} Eric A. Carlen$^1$, Elliott H. Lieb$^{2}$ and Robin Reuvers$^{3}$\\
\vspace{5pt}\small{$1.$ Department of Mathematics, Hill Center,}\\[-6pt]
\small{Rutgers University,
110 Frelinghuysen Road
Piscataway NJ 08854-8019 USA}\\
\vspace{5pt}\small{$2.$ Departments of Mathematics and
Physics, Jadwin
Hall,} \\[-6pt]
\small{Princeton University, Washington Road, Princeton, NJ
  08544-0001}\\
\vspace{5pt}\small{$3.$ Department of Physics, Jadwin
Hall,} \\[-6pt]
\small{Princeton University, Washington Road, Princeton, NJ
  08544-0001}\\
}
\date{\version}
\maketitle 
\footnotetext                                                                         
[1]{Work partially
supported by U.S. National Science Foundation
grant DMS 1201354.   }  

\footnotetext
[2]{Work partially
supported by U.S. National Science Foundation
grant PHY 0965859  and PHY-1265118.}

\footnotetext
[3]{On leave from Department of Mathematical Sciences, University of Copenhagen. Work supported by ERC Grant Agreement Nos. 321029 and 337603.\\
\copyright\, 2016 by the authors. This paper may be
reproduced, in its
entirety, for non-commercial purposes.}

\begin{abstract}
Unlike bosons, fermions always have a non-trivial entanglement.
Intuitively, Slater determinantal states should be the least entangled
states. To make this intuition precise
we investigate  entropy and entanglement of fermionic
states and prove
some extremal and near extremal properties of reduced density matrices of
Slater determinantal states. 

\end{abstract}

\medskip
\leftline{\footnotesize{\qquad Mathematics subject
classification numbers: 81V99, 82B10, 94A17}}
\leftline{\footnotesize{\qquad Key Words: Density matrix, Entropy, Partial
trace, Entanglement }}

\section{Introduction} \label{intro}

While bosons are often thought of as more complicated than fermions
because of the phenomenon of Bose-Einstein condensation, there is at least
one way in which fermions are more complicated, and  that is the topic
studied here. We investigate the entanglement of fermions caused by
the Pauli principle and we seek the minimum possible entropy and
entanglement. This entanglement is not zero and has been the subject of
much discussion in the literature \cite{AFOV}, but  many questions
about the quantification of entanglement forced by statistics remain open. 
Intuitively, 
minimal entanglement should occur for (Slater) determinantal states. 
To make sense of this intuition, quantitative measurements of entanglement
are needed.

Our motivation for studying minimal fermionic entanglement and minimal
entropy  stems from an effort to understand the two-particle
density matrix of fermions, which is an  important topic for density
functional theory and many-body theory. Information about two-particle
correlations
are potentially useful,  and entanglement is one interesting variety of correlation.

It is important to be clear about the definition of entanglement.
Several authors define entanglement for fermions as the entanglement
relative to a Slater determinantal state. See \cite{AFOV,E,Per} for 
reviews and discussion. This
difference is sometimes called
`correlation'. In any case, whichever definition one uses, a basic
question is to quantify the entanglement of a given multi-particle state
and to determine which states minimize various measures of entanglement.
The next section
will contain precise definitions. While there is a considerable literature
on the definition and information theoretic value of various measures of
entanglement, it is not our goal here to dwell on this matter. Rather, our
goal is to
find precise, sharp values for several measures of entanglement and entropy
for fermionic states, about which relatively little is known.\\

Bosons, in contrast to fermions, are not necessarily entangled. The simplest
state of
$N$ bosons is a condensate, namely the pure state $|\Psi\rangle\langle
\Psi|$
where $\Psi$ is a simple product, $\Psi (x_1,\dots, x_N) = \phi(x_1)\cdots
\phi(x_N)$ and $\phi$ is normalized. All reduced density matrices are of
the same form, i.e., pure products, and all have zero entropy and
entanglement. Of course a bosonic state can have arbitrarily high entropy
and arbitrarily entangled reduced density matrices,
but we are interested in the lowest values. The product states are also
important because they form a (necessarily non-orthogonal) basis for 
bosonic
wave functions. 

The simplest fermionic wave functions one can think of are Slater determinants. 
These span the space and an orthonormal basis can be chosen from among
them.  Our goal is to
show that they have the smallest entropy and entanglement -- and these
minimum values are not zero.   From this point of view, {\em determinants
are  the fermionic analogue of bosonic condensates}. Our first theorem
considers the mutual information of bipartite fermionic states, which is
minimized for 2-particle reduced density matrices of Slater determinants.\\

Let us make a remark before proceeding. The number $N$
will appear in some theorems and it might be argued that each  electron is 
necessarily entangled with all the electrons in the universe through the
Pauli principle, and thus $N\approx \infty$. This is not physically
correct, of course. The mathematical solution to this apparent paradox is to
realize that a density
matrix represents a state on an algebra of observables, and that one must
use the lowest dimension possible to accommodate all the observables under
consideration in the algebra.  One can call this a kind of `coarse
graining'. In our 
case we imagine that $N$ particles are trapped in a box (or in an
isolated atom)  and our observables
refer only to properties inside the box. The Hilbert space for the
$N$-body density matrix is then the
antisymmetric product 
$\wedge^N \mathcal{H}$, where $  \mathcal{H}$ is, e.g., the Hilbert space of
one particle (including spin) in the box. All our theorems apply both to
finite-dimensional Hilbert spaces and to separable infinite-dimensional
ones, but we mention only the finite-dimensional cases for
simplicity. \\

The set-up of the paper is as follows. In Section \ref{mutinf}, we focus on
mutual information and prove a sharp lower bound on this quantity in
Theorem \ref{mutual2}, which is saturated only for Slater determinants.
It is proved using a quantitative
subadditivity inequality (Theorem~\ref{quant}). We then focus on other
measures of entanglement that one might expect are also minimized for
Slaters. We prove sharp bounds that are suggested by these minimization
problems and formulate a number of conjectures that are related to an old
conjecture of Yang concerning the largest possible eigenvalues of fermionic
reduced density matrices, Conjecture \ref{cy}. We provide some new
information on this in Corollary \ref{cycl}. Section \ref{effms} analyzes
entanglement of formation and squashed entanglement, which are relevant
entanglement measures for mixed fermionic states. Section
\ref{proofthms} contains the proofs of a number of theorems, including
Theorems \ref{mutual2} and \ref{quant}.\\

Finally, a word on the conventions we use.
In many-body physics it is
customary 
for the $K$-particle reduced matrix to have trace $\tbinom{N}{K}$. 
Standard
measures of entanglement involve 
entropy, however, and to  define
entropy we require all density
matrices $\rho$ to have {\em unit trace}
($\tr \rho =1$) -- as we do here. 
We also assume that the wedge product of two normalized vectors $\phi$ and $\psi$ has length $\sqrt{2}$, that is, $\phi\wedge\psi:=\phi\otimes\psi-\psi\otimes\phi$. For a wedge product of $K$ vectors, this length is $\sqrt{K!}$. 
Fermionic projectors, generally denoted by $P$ in this paper, satisfy $P^2=P$, so that for example for two particles $P(\phi\otimes\psi)=\phi\wedge\psi/2$. 
These conventions should be kept in mind in what follows.

\section{Mutual information}
\label{mutinf}
Let $\rho_{1\cdots N}$ be a permutation invariant density matrix, as is the
case for either bosons or fermions. The $K$-particle reduced density matrix
is defined to be
\[
 \gamma_K = \tr_{K+1 \cdots N} \ \rho_{1\cdots N}.
\]
where $\tr_{K+1 \cdots N}$ denotes the partial trace over the last $N-K$
factors of $\cH$.

A permutation invariant state $\rho_{1\cdots N}$ may be regarded as a
bipartite state in
$N-1$ different ways, corresponding to the factorizations 
$(\otimes^K\cH)\otimes (\otimes^{N-K}\cH)$ for $K =1,\dots,N-1$.  
  In what follows, we only consider permutation invariant
$N$-particle states. 

When  $\rho_{1\cdots N}=  |\psi\bk \psi|$ is a pure state, the entanglement of $ |\psi\bk \psi|$ regarded
as a bipartite state on $(\otimes^K\cH)\otimes (\otimes^{N-K}\cH)$ is
naturally quantified \cite{B1} as
\begin{equation}\label{entropyq}
S(\gamma_K) = S( \gamma_{N-K})\ ,
\end{equation}
while for mixed states, the issue of quantifying entanglement is more complicated \cite{B2}. We return to this later, and focus for
now on the entanglement of $N$-particle fermionic states as measured by the
quantities in (\ref{entropyq}).  It is natural to seek sharp lower bounds on
the entropies in 
(\ref{entropyq}) for $N$-particle fermionic states, which by the concavity of the entropy, will be minimized by pure states.

The only case that is clearly understood  is that in which $K=1$ (and by
symmetry $K = N-1$).
Coleman's Theorem (see
e.g., \cite[Theorem 3.1]{ls}),  says that
the extreme points of the set of all reduced one-particle density matrices of
$N$-particle (mixed) fermionic state are the reduced density matrices of 
$N$-particle Slater determinants, for which all eigenvalues are exactly
$1/N$. That implies that
the largest eigenvalue of 
$\gamma_1$ is at most $1/N$. For any $M$-dimensional density matrix $\rho$ with eigenvalues $\{\lambda_1,\dots,\lambda_M\}$,
$$S(\rho) = -\sum_{j=1}^M \lambda_j \ln(\lambda_j) \geq   -\sum_{j=1}^M \lambda_j\ln\|\rho\|_\infty  =  -\ln \|\rho\|_\infty,$$
with equality if and only if all positive eigenvalues are equal. 

It follows that if $\rho_{1\dots N}$ is an $N$-particle 
fermionic  state, then $S(\gamma_1) \geq \ln N$, and there is equality if and only if $\rho_{1\dots N}$ is an $N$-particle Slater. 

This settles the cases $K=1$ and $K=N-1$, but since we lack an analogue of
Coleman's Theorem for other values of $K$, there is no easy route to a lower
bound even for $S(\gamma_2)$, although, as we discuss below, it is likely
that the lower bound
is again given by Slaters. 
It will be useful to consider the entropies in (\ref{entropyq}) as measures of mutual information.

\begin{defi} Let $\rho_{1\cdots l}$ be a density
matrix on a Hilbert space $\cH_1\otimes\dots\otimes\cH_l$, and let
$\rho_j$ 
be the reduced density matrix on $\H_j$. The {\em mutual information} in the
state $\rho_{1\cdots l}$ is the quantity
\begin{equation}
\label{mutual}
\sum_{j=1}^l S(\rho_j) - S(\rho_{1\cdots l})\ .
\end{equation}
\end{defi}

The difference in (\ref{mutual})  is well known to be non-negative and zero
if and only if $\rho_{1\cdots N} = \rho_1\otimes\dots\otimes\rho_l$. When
$\rho_{1\cdots N}$ is fermionic, this last condition is impossible, and it
is natural to seek optimal lower bound
on the mutual information for fermionic states.

The entropies in (\ref{entropyq}) can be expressed in terms of mutual informations because if $\rho_{1\cdots N}$ is a pure fermionic $N$-particle 
state regarded as a bipartite state on $(\otimes^K \cH ) \otimes
(\otimes^{N-K}\cH)$, then the mutual information is exactly
$2S(\gamma_K) = 2S( \gamma_{N-K})$.

\begin{thm}[Mutual Information lower bounds]\label{mutual2}  
Let $\gamma_1,\gamma_2$ be the
reduced $1,2$-particle
density matrices of an $N$-particle fermionic state, respectively. 
Then
\begin{equation}\label{subadd1}
2S(\gamma_1) -S(\gamma_2)\geq \ln\left(\frac{2}{1 -
\tr\gamma_1^2}\right)\ ,
\end{equation} 
and there is equality if and only if the $N$-particle fermionic state is a
pure-state Slater determinant.

More generally, for an $N$-particle fermionic state $\rho_{1\dots N}$
\begin{equation}\label{subedn}
 NS(\gamma_1) - S(\rho_{1\dots N})  \geq -\ln
e_N(\gamma_1),
\end{equation}
where $e_N (\gamma_1) $ is the $N^{\rm th}$ elementary symmetric function
of the eigenvalues of $\gamma_1$, namely, $\sum_{i_1<i_2< ...<i_N}
\lambda_{i_1} \cdots \lambda_{i_N}$. This can be expressed in terms of
$p_j = \tr \gamma_1^j$ as

\begin{equation}\label{power}
e_N = \frac{1}{N!}\, \det \left[\begin{array}{ccccc}
1 & 1 & 0 & \dots &\phantom{0}\\
p_2  & 1 & 2 & 0 &\dots\\
\vdots & &\ddots &\ddots & \\
p_{N-1} & p_{N-2} & \dots & 1 & N-1\\
p_N & p_{N-1} & \dots & p_2 & 1\end{array}\right]
\end{equation}
For example, $e_3 =  (1-3p_2 +2p_3)/6$.

\end{thm}

\begin{remark}There is equality in (\ref{subadd1})  when $\gamma_2$
is is the reduced $2$-particle density matrix of an $N$-particle Slater
determinant since, in this case,
$$S(\gamma_2) = \ln \tbinom{N}{2}\ , \quad S(\gamma_1)= \ln N,\quad  {\rm
and}\quad \tr\gamma_1^2 = 1/N.$$

\end{remark}

We now apply this when $\rho_{1\dots N} = |\psi\bk \psi|$ is an $N$-particle 
fermionic pure state, and seek to estimate the entropies in (\ref{entropyq}). By Jensen's inequality,
\begin{equation}\label{jensen}
e^{-S(\gamma_1)} \leq \tr\gamma_1^2 .
\end{equation}
Therefore, (\ref{subadd1}) implies a bound that can be expressed entirely in terms of entropy:
\begin{equation}\label{subadd1p}
2S(\gamma_1) -S(\gamma_2)\geq \ln\left(\frac{2}{1 -
e^{-S(\gamma_1)} }\right).
\end{equation} 
If $\gamma_2=\rho_{12}$ is pure $S(\gamma_2) =0$, and we have that $2S(\gamma_1) - \ln(2/(1 - e^{-S(\gamma_1)})) \geq 0$, and this implies that 
$S(\gamma_1) \geq \ln 2$, which also follows from the fact that the largest
eigenvalue of $\gamma_1$ is no greater than $1/2$, but 
this shows that even the weakened form (\ref{subadd1p}) is sharp.

We now return to the problem of estimating the entropies in (\ref{entropyq}).  When $K=1$ or $N-1$
we have seen that Coleman's Theorem provides a complete description of the convex set of reduced one particle 
density matrices of $N$-particle fermionic states, from which it readily follows that:
\textit{1-particle reduced density matrices
of Slater determinants minimize the entropy and maximize both the
Hilbert--Schmidt norm and the largest eigenvalue.}

Less is known about the set of all fermionic reduced $2$-body density matrices. Yang's theorem \cite{Y} says that no eigenvalue of such a density matrix can
exceed
$\frac{N}{2} {\tbinom{N}{2}}^{-1}= 1/(N-1)$, and this is attained for the
so-called Yang pairing state (see 
the start of Section \ref{largesteig} for a definition). This is large compared to
${\tbinom{N}{2}}^{-1}$, which 
is the value of every non-zero eigenvalue for a pure Slater determinant.
Yang's bound would allow a 
reduced density matrix with $(N-1)$ eigenvalues close to this size, and
hence an entropy of order $\ln(N)$. 
However, the 2-particle reduced density matrix of the pairing state has a very large
entropy, which we compute Proposition \ref{eigsandmults},
due to a
large number of very small eigenvalues, and is not competitive in the
search for any entropy minimizer. In contrast, the entropy of a reduced
$2$-particle density matrix of an $N$-particle
Slater state is $\ln\tbinom{N}{2}$, which is of order $2\ln(N)$.
We therefore conjecture: 
\begin{conjecture} 
\label{c1}
The 2-particle reduced density matrix of a
Slater determinant minimizes the entropy, that is,
\[
S(\gamma_2) \geq \ln\tbinom{N}{2},
\]
where $\gamma_2$ is the 2-particle density matrix of any $N$-particle
fermionic state.
\end{conjecture}
By Jensen's inequality \eqref{jensen}, this would be implied by the stronger
conjecture that 2-particle reduced density matrices of Slater determinants maximize the Hilbert--Schmidt norm:
\begin{conjecture}
\label{c2}
\[
\tr[\gamma_2^2] \geq \tbinom{N}{2}^{-1}
\]
\end{conjecture}
The Yang state also has a squared Hilbert--Schmidt norm of order
$N^{-2}$, but it is smaller than the value above. (See Remark~\ref{yangp} below.)

A weaker conjecture is:
\begin{conjecture}
\label{c3}
\begin{equation}
\label{weakestconjecture}
S(\gamma_2) \geq 2\ln N+ \mathcal{O}(1)
\end{equation}
\end{conjecture}

A strategy to prove this conjecture comes from the proof of Theorem
\ref{mutual2}, which is based on a quantitative
subadditivity inequality proved in Theorem~\ref{quant}. Suppose
now that $N = 2n$, and that $\rho_{1\cdots N}$ is a
fermionic
$N$-particle state.  Let $\gamma_2$ denote its reduced $2$-particle density
 matrix.  The relative entropy of $\rho_{1\cdots N}$ with respect to the product state $\otimes^n\gamma_2$
 is precisely $nS(\gamma_2) - S(\rho_{1\cdots N})$ and is non-negative on account of the subadditivity of the entropy.  Theorem~\ref{quant} below provides a lower bound for this non-negative quantity,
 which is
\begin{equation*}\label{st0}
S(\rho_{1\cdots N}) -\tfrac{N}{2}S(\gamma_2) \leq 2\ln\left( \tr\left[
\sqrt{\rho_{1\cdots N}}\left(\sqrt{\gamma_2}\otimes \cdots \otimes
\sqrt{\gamma_2}\right)\right]\right).
\end{equation*}
Since the eigenfunctions of $\sqrt{\rho_{1\cdots N}}$ are antisymmetric, we may replace 
$\sqrt{\gamma_2}\otimes \cdots \otimes
\sqrt{\gamma_2}$ in this estimate by $P\sqrt{\gamma_2}\otimes \cdots \otimes
\sqrt{\gamma_2}\ P$ where $P$ is the projector onto the antisymmetric subspace, and then apply Cauchy--Schwarz to obtain
\begin{equation*}\label{st00}
S(\rho_{1\cdots N}) -\tfrac{N}{2}S(\gamma_2) \leq 
\ln\left(\tr\left[ P\left(\gamma_2\otimes \cdots \otimes
\gamma_2\right)P\right]\right).
\end{equation*}
By concavity of the entropy (or convexity of the Hilbert--Schmidt norm), it
suffices to consider pure $\rho_{1\cdots N}$ to prove the conjectures
above, in which case 
\begin{equation}
\label{entropyest}
S(\gamma_2)\geq-\tfrac{2}{N}\ln\left(\tr \left[P\left(\gamma_2\otimes \cdots
\otimes
\gamma_2\right)P\right]\right).
\end{equation}
To study the norm, we prove the following theorem. 
\begin{thm}
\label{equivconst}
Let $P$ denote the projector onto $\wedge^N\mathcal{H}$ and let $1\leq K\leq
N-1$. Let $M$ be the dimension of the Hilbert space. We define
\begin{equation}
\label{normproblem}
C^{M,N}_{K}\ \
:=\sup_{\substack{\text{$\|\psi_1\|=\|\psi_2\|=1$}\\\text{
$\psi_1\in\wedge^K\mathcal{H}$,
$\psi_2\in\wedge^{N-K}\mathcal{H}$}}}\|P(\psi_1\otimes\psi_2)\|^2\ \ ,
\end{equation}
and, using the convention that $\tr(\gamma_K^\Psi)=1$,
\begin{equation}
\label{eigproblem}
\Lambda^{M,N}_K\ \ :=
\sup_{\substack{\text{$\|\Psi\|=1$}\\\text{$\Psi\in\wedge^N\mathcal{H}$}}}
\lambda^{\text{max}}(\gamma_K^\Psi)\ .
\end{equation}
We then have that
$\Lambda^{M,N}_K=C^{M,N}_{K}=C^{M,N}_{N-K}=\Lambda^{M,N}_{N-K}$.
\end{thm}
\begin{proof}
On the one hand, we have for any normalized $\Psi\in\wedge^N\mathcal{H}$
$$
\lambda^{\text{max}}(\gamma_K^\Psi)\ \
=\sup_{\substack{\text{$\|\psi_1\|=\|\psi_2\|=1$}\\\text{
$\psi_1\in\wedge^K\mathcal{H}$,
$\psi_2\in\wedge^{N-K}\mathcal{H}$}}}\left|\langle\Psi,
\psi_1\otimes\psi_2\rangle\right|^2\ 
\leq\  C^{M,N}_{K}.
$$
The first inequality is obtained by calculating the reduced density matrix
from the Schmidt decomposition of $\Psi$, and the second by applying
Cauchy--Schwarz and by using that $\Psi=P\Psi$. 
On the other hand, any normalized $\psi_1\in\wedge^K\mathcal{H}$ and
$\psi_2\in\wedge^{N-K}\mathcal{H}$ satisfy
$$
\|P(\psi_1\otimes\psi_2)\|^2\ \
=\sup_{\substack{\text{$\|\phi_1\|=\|\phi_2\|=1$}\\\text{
$\phi_1\in\wedge^K\mathcal{H}$,
$\phi_2\in\wedge^{N-K}\mathcal{H}$}}}\left|\left\langle\frac{
P(\psi_1\otimes\psi_2)}{\|P(\psi_1\otimes\psi_2)\|},
\phi_1\otimes\phi_2\right\rangle\right|^2\leq \Lambda^{M,N}_K,
$$
where we again applied the Schmidt decomposition to obtain the final
inequality.
\end{proof}

\begin{cl}
For an $M$-dimensional Hilbert space $\mathcal{H}$, any vectors
$\psi_1\in\otimes^K\mathcal{H}$ and $\psi_2\in\otimes^{N-K}\mathcal{H}$
satisfy
$$
\|P(\psi\otimes\tilde{\psi})\|^2\leq
\Lambda^{M,N}_K\|\psi_1\|^2\|\psi_2\|^2,
$$
and any density matrices $\rho_1$ on
$\otimes^K\mathcal{H}$ and $\rho_2$ on $\otimes^{N-K}\mathcal{H}$ satisfy
$$
\tr\left[P(\rho_1\otimes\rho_2)P\right]\leq
\Lambda^{M,N}_K\tr\left[\rho_1\right]\tr\left[\rho_2\right].
$$
\end{cl}
Note that both results can be iterated to obtain further inequalities on
composite vectors and density matrices. For $K=2$ and $N=2n$ and $M=2m$
even, we have more information from Yang's theorem (see Proposition \ref{Yangresult}):
$\Lambda^{M,N}_2=(N-1)^{-1}(m-n+1)/m$, and the maximizer is unique.

\begin{cl}
\label{2particlecl}
Let $N=2n$ and $M=2m$. Let $\psi_1,\dots,\psi_n\in\mathcal{H}\wedge\cH$ be
normalized. We then have
\[
\|P(\psi_1\otimes\dots\otimes\psi_n)\|\ \leq\ 
\prod^n_{j=1}\ \Lambda^{M,2j}_2=\ \prod^n_{j=1}\ \frac{1}{2j-1}\frac{m-j+1}{m}
\]
with equality if and only if these vectors are all equal and an equal sum of
pairs.
\end{cl}
Returning to \eqref{entropyest}, we note that an application
of Corollary \ref{2particlecl} gives the estimate
\[
S(\gamma_2) \geq \ln N+ \mathcal{O}(1),
\]
which is a factor 2 off from \eqref{weakestconjecture}. Note that this also
follows directly from Yang's theorem and the fact that the entropy is bounded below by $-\ln \|\gamma_2\|_\infty$. This estimate is expected to be far from optimal
because the 2-particle reduced density matrix of an $N$-particle fermionic
state can never be rank 1, which would have be the case to satisfy the
bound in Corollary \ref{2particlecl}.

Motivated by the relevance of the Yang state and largest eigenvalues of reduced density matrices, we
present some calculations in the next section.

\section{The Yang pairing state} 
\label{largesteig}
The Yang pairing state \cite{Y} is defined as follows.  
Let $\H= \C^M$ be the one-particle Hilbert space and consider $N\leq M$ particles. Assume that both $M$ and $N$ are even integers, so that we can define integers $m=M/2$ and
$ n=N/2$.
We choose an orthonormal basis of $\H$, 
$u_i$ with $1\leq i\leq M$ and we consider the set
of $2n$-particle Slater
determinants $\phi_\alpha$  that are composed of  $n$ pairs of vectors $
\pi_i =u_{2i-1},
u_{2i}$. There are $m$ such pairs, which are a small fraction of the
$\tbinom{2m}{2}$ pairs with arbitrarily chosen indices. The number of determinants $\phi_\alpha$ that we can build from these pairs is
$\tbinom{m}{n}$. The pairing state (a vector in $ \H^{\wedge N}$) is given by the equal superposition of these determinants:
\begin{equation}
\label{pstate}
 |\Psi_{M,N}\rangle =    \tbinom{m}{n}^{-1/2} \sum _\alpha \phi_\alpha .
\end{equation}

The eigenvalues of reduced density matrices of the Yang pairing state can
give information about its entropic properties, so we will consider these first.

Yang \cite{Y} proved the following optimality result for the pairing state.
\begin{prop}
\label{Yangresult}
Let $\gamma_2^\Psi$ be the 2-particle reduced density matrix of an
N-particle fermionic state $\Psi\in\wedge^N\C^M$. If $M=2m$ and $N=2n$ are
even
\begin{equation}
\label{Yangproof}
\lambda^{\text{max}}(\gamma_2^\Psi)\leq  \frac{1}{N-1}\frac{m-n+1}{m},
\end{equation}
and this is attained if and only if $\Psi$ is a Yang pairing state in some
basis.

More generally, let $M=2m$ if it
is even and $M=2m+1$ if it is odd. Similarly let $N=2n$ if it is even and
$N=2n+1$ if it is
odd. Then, we have that
\begin{equation}
\label{newproof}
\lambda^{\text{max}}(\gamma_2^\Psi)\leq  \begin{cases}   (N-1)^{-1} &  {\rm
if}\ N\ {\rm is\ even}\\
\ \ \ N^{-1} &  {\rm
if}\ N\ {\rm is\ odd}
\end{cases}\ \ \  .
\end{equation}
\end{prop}

\begin{remark}
Yang's proof of \eqref{Yangproof} uses induction on both $M$ and $N$ and is
rather involved. It turns out that
the $M\to\infty$ behaviour follows from a simple argument, which
generalizes to odd $N$ and $M$. We now give this simple proof of
\eqref{newproof}.
\end{remark}

\begin{proof}
To find the largest possible eigenvalue, we should consider
\[
\sup_{\Psi\in\wedge^N\mathcal{H},\ f\in\mathcal{H}\wedge\mathcal{H}
}(f,\gamma_2^\Psi f).
\]

Given $f\in\mathcal{H}\wedge\mathcal{H}$, a result of Youla \cite{Yo}, and
at about the same time Yang \cite{Y}, states that there are an orthonormal
basis
$\{u_i\}_{1\leq i\leq M}$ of $\cH$
and positive numbers $\{d_j\}_{1\leq j\leq m}$ so that
\begin{equation}
\label{YangYoula}
f = \sum^m_{j=1}
d_j \  \frac1{\sqrt{2}} u_{2j-1}\wedge u_{2j} ,
\end{equation}
where $\sum_{j} (d_j)^2 =1$ and the convention for $\wedge$ was mentioned in the introduction.

Let $\boldsymbol{\alpha}$ denote a set of $N$ indices 
$\{\alpha_1,\dots,\alpha_N\}$ where $1\leq\alpha_k\leq M$, 
and, if these indices are all different, let $u_{\boldsymbol{\alpha}}$
be given by
$$
u_{\boldsymbol{\alpha}}  = u_{\alpha_1} \wedge \cdots \wedge
u_{\alpha_N},
$$
where $\alpha_1\leq\dots\leq\alpha_N$.
We can expand any state $\Psi$ in Slaters built from $\{u_i\}_{1\leq i\leq M}$:
$$ 
|\Psi\rangle=\sum_{\boldsymbol{\alpha}}c_{\boldsymbol{\alpha}}
\frac{1}{\sqrt{N!}}u_{\boldsymbol{\alpha}},
$$
where $c_{\boldsymbol{\alpha}}=0$ if $\boldsymbol{\alpha}$ contains the same
index more than once and $\sum_{\boldsymbol{\alpha}}
|c_{\boldsymbol{\alpha}}|^2 =1$. 
The 2-particle reduced density matrix is then
$$
\gamma^\Psi_2=\tbinom{N}{2}^{-1}\sum_{i<j,\
i'<j'}\left(\sum_{\boldsymbol{\beta}}c_{\{i,j\}\cup\boldsymbol{\beta}}\
\overline{c_{\{i',j'\}\cup\boldsymbol{\beta}}}\
(-1)^{\sigma(i,j,\boldsymbol{\beta})+\sigma(i',j',\boldsymbol{\beta})}
\right)\frac12|u_i\wedge u_j\rangle\langle u_{i'}\wedge u_{j'}|
$$
where $\boldsymbol{\beta}$ is a set of $(N-2)$ indices, and
$\sigma(i,j,\boldsymbol{\beta})$ is the sign of the permutation that orders
$\{i,j\}\cup\boldsymbol{\beta}$.

For $1\leq j\leq m$, let $p_j=\{2j-1,2j\}$ denote a pair of indices
appearing in \eqref{YangYoula}. We have
\begin{equation}
\label{problemmin}
\begin{aligned}
(f,\gamma_2^\Psi
f)&=\tbinom{N}{2}^{-1}\sum_{\boldsymbol{\beta}}\left|\sum_{j}
c_{p_j\cup\boldsymbol{\beta}}\ (-1)^{\sigma(p_j,\boldsymbol{\beta})}\
d_j\right|^2\\
&\leq\tbinom{N}{2}^{-1}\left(\sum_{j} d_j^2\right)
\sum_{\boldsymbol{\beta},j}
|c_{p_j\cup\boldsymbol{\beta}}|^2\leq \tbinom{N}{2}^{-1}n,
\end{aligned}
\end{equation}
where we have used Cauchy--Schwarz, $\sum_{j} (d_j)^2 =1$ and
$\sum_{\boldsymbol{\alpha}} |c_{\boldsymbol{\alpha}}|^2 =1$, and, also the fact
that each $\boldsymbol{\alpha}$ can contain at most $n$ pairs and hence
appear at most $n$ times in the sum above. 
\end{proof}

\begin{remark}
The first line in \eqref{problemmin} is exact, so it should really give the
maximum $\Lambda^{M,N}_2$ in the even case. The missing factor compared to
\eqref{eigsandmults} comes from the fact that there should be no overlap
between the indices in $p_j$ and $\boldsymbol{\beta}$. This causes the
preferred strategy to be to spread the weight evenly across the $d$'s and
$c$'s (as the Yang state does). Unfortunately, we have not been able to find
a simple argument to prove that this is indeed the best strategy. Since
optimization problems for fermions, when written out in
coefficients, are difficult because of constraints imposed by the exclusion
of repeated indices (such as the constraint
$p_j\cap\boldsymbol{\beta}=\emptyset$ in \eqref{problemmin}), it would be
good to understand the reason that the optimizers in \eqref{problemmin}
have uniformly distributed coefficients.
\end{remark}

The following proposition calculates a number of quantities for a Yang pairing state. 

\begin{prop}
\label{Ys}
Let $N=2n$ and $M=2m$ and let $\gamma_2$ be the two-particle reduced
density matrix of the 
N-particle pairing state \eqref{pstate} built on  $\H= \C^M$. Its
eigenvalues are
\begin{equation} 
\label{eigsandmults}
\Lambda^{M,N}_2=\frac{1}{N-1}\frac{m-n+1}{m} , \ \ \qquad 
\lambda^{M,N}_2=\frac{1}{N-1}\frac{n-1}{m(m-1)},\ \  
\end{equation}
where $\Lambda^{M,N}_2$ has multiplicity 1 and $\lambda^{M,N}_2$ has
multiplicity $2m^2-m-1$.
Consequently, the entropy is
\[
\begin{aligned}
S(\gamma_2)&= -\frac{m-n+1}{m(2n-1)}\ln\left[\frac{m-n+1}{m(2n-1)}\right]\\
&\quad-\frac{(n-1)(2m+1)}{m(2n-1)}\ln \left[\frac{(n-1)}{(2n-1)m(m-1)}\right], 
\end{aligned}
\]
and for $p\geq1$
\begin{equation}
\label{pnormrho}
 \tr[\gamma_2^p]=\left(\frac{1}{N-1}\frac{m-n+1}{m}
\right)^p+(2m^2-m-1)\left(\frac{1}{N-1}\frac{n-1}{m(m-1)}\right)^p.
\end{equation}
\end{prop}

\begin{remark}\label{yangp}
Asymptotically, for $M
\gg N \gg 1$,
the leading term is $S(\gamma_2) \asymp  2\ln M$.
Thus, $S$ can be much larger than $O (\ln N)$, as it is
for a determinant, and can even be infinite. 
Although the
pairing state has a larger eigenvalue (asymptotically $1/N$ instead of
$2/N^2$), and potentially a smaller
entropy, it has so many small eigenvalues that its entropy can
be huge.

Maximizing \eqref{pnormrho} in $M$, we find  
\[
\tr[\gamma_2^p]\leq \begin{cases} 
\ \ \left(\frac{1}{n(N-1)}\right)^{p-1} \ \ \ \ \ &{\rm if \ \ } p\leq1+\tfrac{\ln(N-1)}{\ln(n)}\\
\phantom{xx}\\
\ \ \ \ \left(\frac{1}{N-1}\right)^p \ \ \ \ \ &{\rm if \ \ } p\geq1+\tfrac{\ln(N-1)}{\ln(n)}
 \end{cases}.
\]

\end{remark}
\begin{proof}
Let $ \gamma$ be the 2-particle reduced density matrix for the pure
state $| \Psi_{M,N}\rangle \langle \Psi_{M,N}|$, where $\Psi_{M,N}$ is the a pairing state defined in \eqref{pstate}, {\em normalized so that its trace is
$\binom{N}{2}$}. 
(This will simplify a number of expressions below.) Note that Yang \cite{Y}
takes the trace of the 2-particle density matrix to be normalized to
$2\tbinom{N}{2}$.

It is easy to compute
the matrix
elements of $\gamma$ (with $i<j$ and $k< \ell$):
\[
\frac12\tbinom{m}{n}\langle u_i\wedge u_j |\,\gamma\, | u_k \wedge u_\ell
\rangle
 = \begin{cases}    \tbinom{m-1}{n-1} &  {\rm
if}\ i,j = k,\ell \ {\rm is\ a\ pair} \ \pi_i\\
 \tbinom{m-2}{n-1} &  {\rm
if}\ i,j \ {\rm and }\ k,\ell \ {\rm are\ unequal \ pairs} \\
\tbinom{m-2}{n-2} & {\rm
if}\ i,j \ {\rm and }\ k,\ell \ {\rm are\ equal\ and \ not \ pairs} \\
\ 0 & {\rm otherwise}.
\end{cases}
\]
Therefore, the matrix $\gamma$ has the structure
\begin{equation}\label{form}
\gamma =   \frac{n(m-n) }{m-1} |\chi\rangle\langle \chi| +   \frac{ n(n-1)
}{m(m-1)}P_{\H \wedge \H}
\end{equation}
where 
${\displaystyle \chi = \frac{1}{\sqrt{2m}}\textstyle{\sum_{j=1}^m}
u_{2j-1}\wedge u_{2j}}$ is a unit vector,
and where $P_{\H \wedge \H}$ is the orthogonal projection in $\H\otimes \H$
onto $\H\wedge \H$.
Therefore, $\gamma$ will have two different eigenvalues, with
multiplicities $\mu$,  as follows:
\begin{equation}\label{form2}
\frac{n(m-n) }{m-1} + \frac{ n(n-1) }{m(m-1)}, \ \  \mu = 1 ; \qquad 
\frac{n(n-1)}{m(m-1)},\ \  \mu = 2m^2 -m -1. 
\end{equation}
Using either  (\ref{form}) or   (\ref{form2}), one computes that 
$$\tr(\gamma) = \frac{n(m-n) }{m-1} + (2m^2 -m )\frac{n(n-1)}{m(m-1)}  =
2n^2 -n  = \tbinom{N}{2}\ ,$$
as it must be.

Now letting $\gamma_2$ denote the normalized $2$-particle reduced density
matrix (i.e.\ with trace 1), we have 
\begin{equation}\label{form3}
\gamma_2=   \frac{(m-n) }{(2n-1)(m-1)} |\chi\rangle\langle \chi| +  
\frac{ n-1 }{(2n-1)m(m-1)}P_{\H \wedge \H}\ ,
\end{equation}
which gives the stated eigenvalues.
\end{proof}

One might expect that a similar argument (or the more complete proof by Yang) generalizes to $K$-particle reduced density matrices for $K\geq3$. However, both proofs hinge on the Yang--Youla description \eqref{YangYoula} of a fermionic bipartite state, which is particularly clear in the proof above.

The Yang--Youla canonical form for vectors in $\Psi\in\wedge^2\cH\subset\cH\otimes\cH$
follows easily from the variational characterization of the constituents of the Schmidt decomposition of vectors in 
$\cH_1\otimes \cH_2$.
Recall that any such vector has the expansion 
\[
\Psi=\sum_j \sigma_j u_j\otimes v_j,
\]
where $\{\sigma_j\}$ is a non-increasing sequence of non-negative numbers, $\{u_j\}$ is an orthonormal basis of $\cH_1$
and $\{v_j\}$ is an orthonormal basis of $\cH_2$.
In case $\Psi\in \wedge^2\cH$, the variational characterization of $\sigma_1$ gives 
$$\sigma_1 = |\langle \Psi, u_1\otimes v_1\rangle| \geq   \langle \Psi, u\otimes v\rangle|$$
for all unit vectors $u,v \in \cH$. It follows immediately from the antisymmetry of $\Psi$ that $u$ and $v$ are orthogonal,
and that   $|\langle \Psi, v_1\otimes u_1\rangle| = |\langle \Psi, u_1\otimes v_1\rangle|$. Since $v_1\otimes u_1$ is orthogonal to
$u_1\otimes v_1$, it is a valid trial vector for $\sigma_2$, and thus $\sigma_2 = \sigma_1$. Thus
the singular values and the vectors in the Schmidt decomposition come in pairs, and this is precisely the Yang--Youla canonical form \eqref{YangYoula}.

Similarly, any $\Psi\in\wedge^{l_1+l_2}\cH\subset\wedge^{l_1}\cH\otimes\wedge^{l_2}\cH$
can be written in Schmidt form
\[
\Psi=\sum_j \lambda_j \psi_j\otimes \phi_j,
\]
where $\psi_j\in\wedge^{l_1}\cH$ and $\phi_j\in\wedge^{l_2}\cH$.
Again, $\wedge^{l_1}\cH\otimes\wedge^{l_2}\cH$ is much larger than
$\wedge^{l_1+l_2}\cH$. The fact that $\Psi$ is antisymmetric, so that
$P\Psi=\Psi$, imposes conditions on the Schmidt vectors and Schmidt
numbers. While for $l_1=l_2=1$, it is easy to translate these conditions into the
canonical Yang--Youla form, for $l_1+l_2\geq3$, the analogue of this canonical form
(that would presumably involve vectors of the form $\psi\wedge\phi$) is
unknown, which is why there is no equivalent of Yang's theorem for $K\geq3$. 

There is, however, a conjecture by Yang \cite{Y} regarding this, which remains open after more than fifty years: 
\begin{conjecture}[Yang's conjecture]
\label{cy}
There
exist constants $\beta_3, \beta_4,\dots$ such that
\begin{equation}
\label{Yangsconjecture}
\Lambda^{M,N}_K\leq \begin{cases} 
\frac{(N-K)!}{N!}N^{K/2}\beta_K\ \ \ \ \ &{\rm if \ } K \ {\rm
is\ even}\\
\phantom{xx}\\
\frac{(N-K)!}{N!}N^{(K-1)/2}\beta_K \ \ \ \ \ &{\rm if \ } K \ {\rm is\  odd}
 \end{cases}.
\end{equation}
\end{conjecture}
In \eqref{Yangsconjecture}, we assumed that the trace is equal to 1,
rather than the $N!/(N-K)!$ that is used in Yang's paper. This
formulation of the conjecture is somewhat vague because of the
unspecified constants that could be and indeed have to be very large if $K$
is approximately $N/2$, which will be emphasized below. Theorem
\ref{equivconst} allows us to be a bit more precise and give lower bounds on
these constants, simply by plugging the Yang state as a trial function in
the norm problem \eqref{normproblem}, for which it is easy to do
computations.

If $M=2m$, $N=2n$ and $K=2k$, we consider a tensor product of two Yang states built from the same orthonormal basis and find that 
\[
\Lambda^{M,N}_K\geq\|P(\Psi_{M,K}\otimes\Psi_{M,N-K})\|^2=\frac{\tbinom{n}{k
}}{\tbinom{N}{K}} \frac{\tbinom{m-n+k}{k}}{\tbinom{m}{k}},
\]
with $P$ being the antisymmetric projector.
As $M\to\infty$, this tends to 
\begin{equation}
\label{higheig}
\frac{\tbinom{n}{k}}{\tbinom{N}{K}}=\frac{1\cdot3\cdot5\cdot\ \dots\ \cdot(K-1)}{(N-1)(N-3)(N-5)\dots(N-K+1)}.
\end{equation}
For $K\approx N/2$, this gives a largest eigenvalue that seems much higher than
Yang's conjecture \eqref{Yangsconjecture}. There is no contradiction,
however, since the unspecified constants $\beta_K$ can be very large. It is
difficult to say whether there exist fermionic $K$-particle reduced density matrices with higher eigenvalues than \eqref{higheig}; the key to proving a good
upper bound seems to find a canonical Yang--Youla form for $K\geq3$ as hinted at above. 

For completeness, Table 1  below contains lower bounds for odd values of
$M$, $N$ and $K$ that can be found by using Yang states tensored with a
fixed element (not present in the pairs from which the pairing state is
built) whenever that is necessary.

\begin{cl} 
\label{cycl}
As a corollary
of Theorem
\ref{equivconst}, we have lower bounds for the optimal
eigenvalues $\Lambda^{M,N}_K$ defined in \eqref{eigproblem} that are listed in Table 1 below. Note
that the last two rows also give a result for $K=2k+1$
since $\Lambda^{M,N}_K=\Lambda^{M,N}_{N-K}$.
\end{cl}
\renewcommand{\arraystretch}{2.5}
\begin{table}[h!]\label{table1}
\begin{center}
\begin{tabular}{c|c|c|c}
$M$&$N$&$K$& lower bound for $\Lambda^{M,N}_K$\\
\hline
 $2m$ or $2m+1$ & $2n$ & $2k$ & $\frac{\tbinom{n}{k}}{\tbinom{N}{K}} \frac{\tbinom{m-n+k}{k}}{\tbinom{m}{k}}$\\
 $2m$ or $2m+1$ & $2n$ & $2k+1$ & $\frac{\tbinom{n-1}{k}}{\tbinom{N}{K}} \frac{\tbinom{m-n+k}{k}}{\tbinom{m-1}{k}}$\\
 $2m$ & $2n+1$ & $2k$ &$\frac{\tbinom{n}{k}}{\tbinom{N}{K}} \frac{\tbinom{m-1-n+k}{k}}{\tbinom{m-1}{k}}$\\
 $2m+1$ & $2n+1$ & $2k$ &$\frac{\tbinom{n}{k}}{\tbinom{N}{K}} \frac{\tbinom{m-n+k}{k}}{\tbinom{m}{k}}$\\
\end{tabular}
\end{center}
\end{table}

\section{Entanglement for fermionic mixed states}
\label{effms}
We now turn to results on bipartite entanglement for fermionic mixed states. There are a range of different entanglement measures that we can consider for mixed fermionic states \cite{B2}. 
In this section we consider two such measures: entanglement of formation and squashed entanglement.

By definition, a bipartite state $\rho_{12}$ state is {\em not entangled} if
and only if it is {\em separable}
which means that it is  in the closure of states of the form 
\[
\rho_{12} = \sum_{k=1}^n \nu_j\, \rho_1^j \otimes \rho_2^j,
\]
where the $\nu_j$ are positive and sum to $1$, and each
$\rho_\alpha^j$ is a density matrix on $\H_\alpha$.  

The 
{\em
entanglement of formation} $\ef$, introduced by Bennett
{\it et al.} \cite{B1,B2}, 
is defined in terms of the von Neumann entropy 
$S(\rho) = -\tr(\rho  \log \rho)$ by the formula
\[
\ef(\rho_{12}) = \inf \left\{ \sum_{j=1}^n \lambda_j 
S(\tr_2 \omega^j) \ :\ \rho_{12} = \sum_{j=1}^n\lambda_j
\omega^j \ \right\},
\]
where $\tr$ and $\tr_2$, respectively, are the traces over
the tensor product $\H_1\otimes \H_2$ and the partial 
trace over $\H_2 $ alone. 
The coefficients
$\lambda_j$ in the expansion are required to be positive and
sum to $1$, and each $\omega^j$ is a state on $\H_1\otimes
\H_2$, which, by the concavity of $S$, may be taken to be a
pure
state without affecting the value of the infimum.  Since the two partial
traces of a 
pure state have the same spectrum and hence the same entropies \cite{al},
$\ef(\rho_{12})$ is symmetric in $1$ and $2$.
It is
known that $\ef(\rho_{12}) =0$ if and only if $\rho_{12}$ is
separable; see \cite{BCY} for a discussion of this result in relation to
other measures of entanglement.

Any bipartite fermionic state will be entangled according to this
definition. Another definition that is appropriate for fermions is  to say
that
$\rho_{12} $ is {\em fermionic-separable} if and only if it is a convex
combination of projections onto 2-body Slater determinantal states
\cite{AFOV,KWCG}. 
Otherwise it is {\it fermionic entangled}. A number of authors
\cite{BCW,GM,IV,KWCG,
SCKLL,WS} have
proposed quantities that  measure the degree of fermionic entanglement.
One looks for a measure of
entanglement that is positive (sometimes called `faithful') on all
entangled states and zero on fermionic-separable states.

We first prove that Slater determinants uniquely minimize the  
usual `entanglement of formation', $\ef$,  and, therefore,
the excess of 
 $\ef$ over the Slater value is a faithful measure of fermionic
entanglement. This is
perhaps the first faithful quantification of fermionic entanglement that
uses conventional quantities, like $\ef$, which have an operational
meaning, unlike the `Slater rank', which is faithful by definition, but
which
is difficult to compute, is discontinuous, and does not have a clear
operational interpretation.

\begin{thm}\label{entform1}
Let $\rho_{12}$ be a bipartite fermionic state.
Then
\begin{equation}\label{lower}
\ef(\rho_{12}) \geq \ln(2),
\end{equation}
and there is equality if and only if $\rho_{12}$ is a
convex combination of 
pure-state Slater determinants; i.e., the state is fermionic separable. 
That is, the quantity
$$
\ef^A(\rho_{12}) := \ef(\rho_{12}) -\ln(2)
$$
is a faithful measure of fermionic entanglement. In particular, if
$\rho_{12}$ is the 2-particle reduced density matrix of an $N$-particle fermionic
state, then (\ref{lower}) is true and equality holds if and only if
the state $\rho_{12}$ is fermionic separable. 
\end{thm}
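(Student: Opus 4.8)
The plan is to reduce the bound to a sharp inequality for the one-particle entropy of a \emph{pure} two-fermion state, and then lift it to $\ef$ by a term-by-term argument. I would first prove the pure-state bound: if $\omega=|\psi\rangle\langle\psi|$ with $\psi\in\wedge^2\H$ normalized, then $S(\tr_2\omega)\geq\log 2$, with equality iff $\psi$ is a single Slater determinant. Expand $\psi=\sum_{i,j}A_{ij}\,f_i\otimes f_j$ in an orthonormal basis $\{f_i\}$ of $\H$; antisymmetry of $\psi$ makes $A$ a complex antisymmetric matrix with $\tr(A^\dagger A)=1$, and a direct partial-trace computation gives $\tr_2\omega=AA^\dagger$. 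The decisive structural fact is that the singular values of a complex antisymmetric matrix occur in equal pairs: by the skew-normal (Youla) form there is a unitary $U$ with $A=U\Sigma U^{T}$, where $\Sigma$ is block diagonal with $2\times2$ blocks $\left(\begin{smallmatrix}0 & s_i\\ -s_i & 0\end{smallmatrix}\right)$. Using $U^{T}\bar U=\id$ one finds $AA^\dagger=U\,\mathrm{diag}(s_1^2,s_1^2,s_2^2,s_2^2,\dots)\,U^\dagger$, so every nonzero eigenvalue of $\tr_2\omega$ occurs with even multiplicity.

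Setting $p_i=2s_i^2$ (so $\sum_i p_i=\tr(AA^\dagger)=1$), the entropy factorizes as $S(\tr_2\omega)=-\sum_i 2s_i^2\log s_i^2=-\sum_i p_i\log(p_i/2)=\log 2+H(p)$, where $H(p)=-\sum_i p_i\log p_i\geq 0$. Hence $S(\tr_2\omega)\geq\log 2$, with equality iff $H(p)=0$, i.e. iff a single $p_i$ equals $1$; equivalently $A$ has rank two and $\psi=f_1\wedge f_2$ is a Slater determinant. Since the $\omega^k$ in (\ref{formation}) may be taken pure, each summand obeys $S(\tr_2\omega^k)\geq\log 2$, and $\sum_k\lambda_k=1$ gives $\sum_k\lambda_k S(\tr_2\omega^k)\geq\log 2$ for every admissible decomposition; taking the infimum yields (\ref{lower}).

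For the equality case the converse is immediate: if $\rho_{12}=\sum_k\lambda_k\omega^k$ with each $\omega^k$ a pure Slater determinant, that very decomposition attains the value $\log 2$, so $\ef(\rho_{12})=\log 2$. For the forward direction I would use that in finite dimension the infimum in (\ref{formation}) is attained---by a Carathéodory bound on the number of pure states needed and continuity of $S$ on the resulting compact set of decompositions. At an optimal decomposition, $\sum_k\lambda_k S(\tr_2\omega^k)=\log 2$ together with $S(\tr_2\omega^k)\geq\log 2$ and $\sum_k\lambda_k=1$ forces $S(\tr_2\omega^k)=\log 2$ for every $k$ with $\lambda_k>0$; by the pure-state equality case each such $\omega^k$ is a Slater determinant, so $\rho_{12}$ is fermionic separable. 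The statement for the reduced two-particle density matrix of an $N$-particle state is then the special case where $\rho_{12}$ arises as such a partial trace.

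The \emph{essential ingredient}, and the origin of the constant $\log 2$, is the pairing of the singular values of an antisymmetric matrix---equivalently the even degeneracy of the spectrum of the one-particle density matrix of a pure two-fermion state. Once this is in place the inequality is pure bookkeeping, and the only remaining point needing care is the attainment of the infimum defining $\ef$, which is what converts the term-by-term inequality into the sharp characterization of the equality case.
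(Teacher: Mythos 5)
Your proof is correct, and its overall architecture is the same as the paper's: reduce the bound on $\ef$ to the statement that every pure two-fermion state $\omega$ has $S(\tr_2\omega)\geq\ln 2$, apply this term by term in the decomposition $\rho_{12}=\sum_k\lambda_k\omega^k$, and characterize equality through the pure-state equality case. The difference lies in how the one-body bound is established. The paper invokes Coleman's theorem: every eigenvalue of the one-particle reduced density matrix of a two-fermion state (pure or mixed) is at most $1/2$, whence $-\sum_i\lambda_i\ln\lambda_i\geq\ln 2$ immediately, with equality iff exactly two eigenvalues equal $1/2$. You instead prove the pure-state case from scratch via the Youla canonical form of the antisymmetric coefficient matrix, obtaining the even degeneracy of the spectrum of $\tr_2\omega$ and the exact identity $S(\tr_2\omega)=\ln 2+H(p)$. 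This is essentially the two-particle instance of the Yang canonical form that the paper itself uses later (in the proof of the $2$-particle entropy bound), so it is a natural and self-contained substitute for Coleman's theorem; it buys a transparent equality case (rank-two $A$ iff Slater determinant) at the cost of applying only to pure states, which is all that is needed here since the $\omega^k$ may be taken pure. You are also more explicit than the paper about one genuine point: the forward direction of the equality statement requires the infimum in the definition of $\ef$ to be attained, which you justify by Carath\'eodory plus compactness in finite dimension; the paper passes over this silently. (If $\H$ is infinite-dimensional this attainment needs a separate argument, but that caveat applies equally to the paper's own proof.)
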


\begin{proof}
As discussed at the start of Section \ref{mutinf}, the largest eigenvalue of $\gamma_1$ cannot exceed $1/2$, and hence any fermionic
density matrix $\rho_{12}$ on $\H\otimes\H$ satisfies 
\[
 S(\gamma_1) \geq \ln 2,
\]
and this occurs exactly when $\gamma_1 $ is the reduced density
matrix of a 2-particle Slater determinant. This proves the result since $\ef$ is a convex combination of
such 1-body entropies, and if each is bounded below by $\ln 2 $ then so is
the convex combination.
\end{proof}

\begin{remark} Similar statements for pure states were made in \cite{GM}. 
 Were we computing the entropy using $\log_2$ in place of the natural
logarithm, the lower bound would be $1$. 
\end{remark}

Another faithful measure of entanglement is the {\em squashed
entanglement}, introduced by Tucci \cite{T} and studied by
Christandl and Winter \cite{CW1}. 
It is defined by 
\[
\es (\rho_{12}) =    \tfrac12 \inf_{\rho_{123}} \left\{ -S(\rho_{123}) -S(\rho_3) +
S(\rho_{13}) + S(\rho_{23}) \right\},
\]
where 3 refers to an additional Hilbert space and $\H = \H_1\otimes \H_2
\otimes \H_3$, and $\tr_3 \rho_{123} = \rho_{12}$. The infimum is taken over
all such extensions of $\rho_{12}$.  As a consequence of strong
subadditivity \cite{LR}, $\es(\rho_{12}) \geq 0$.

The squashed entanglement is a
faithful measure of entanglement, meaning that $\es(\rho_{12} ) =0$  if and
only if $\rho_{12}$ is separable (in the usual non-fermionic
sense) \cite{BCY}.
It is less than or
equal to the entanglement of formation,
and it is claimed to measure only quantum mechanical correlations.
\begin{conjecture}
\label{c4}
Convex combinations of Slater determinantal states uniquely minimize the
squashed entanglement, as they do for $\ef$.  
\end{conjecture}

\textbf{Question:} If $\gamma_{2}$ is the 2-particle reduced density matrix of an
$N$-particle fermionic state, is $\es (\gamma_2) $ greater than or equal
to the squashed entanglement of an $N$-particle Slater determinant? If so,
is the difference a faithful measure of fermionic entanglement? 

Theorem \ref{entform1} shows this to
be the case for entanglement of formation, and the first step in the
proof was  to
compute the $\ef$ of a two-particle density matrix  of a Slater. This
number turned out to
be independent of $N$.  The situation is different for $\es$, for we cannot
compute $\es$ for  a Slater determinant,  but  the following
theorem, discovered by Christandl, Schuch and Winter \cite{CSW},
definitely shows that there must be an $N$ dependence.

\begin{thm}[Squashed entanglement for Slaters] \label{sqthm}
Let $\gamma_2$ be the
2-particle reduced density matrix of an
$N$-particle Slater determinant. Then 
\begin{equation} \label{squash} 
 \es (\gamma_2) \leq   
 \begin{cases} 
\ln \frac{N+2}{N}\ \ \ \ \ {\rm if \ } N \ {\rm
is\ even}\\
\phantom{xx}\\
\tfrac12 \ln \frac{N+3}{N-1} \ \ \ \ \ {\rm if \ } N \ {\rm is\  odd}
 \end{cases}.
\end{equation}
\end{thm}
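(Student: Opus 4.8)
The plan is to prove the inequality by exhibiting, for each parity of $N$, an explicit extension $\rho_{123}$ of $\rho_{12}$ and then estimating the conditional mutual information $-S_{123}-S_3+S_{13}+S_{23}=I(1:2|3)$, since by the very definition of $\es$ we have $\es(\rho_{12})\le\tfrac12\, I(1:2|3)$ for \emph{any} such extension. The natural candidates for $\rho_{123}$ are the higher reduced density matrices of the \emph{same} Slater determinant. If $\Psi=\phi_1\wedge\cdots\wedge\phi_N$ with the $\phi_i$ orthonormal and $V=\mathrm{span}\{\phi_i\}$, then the unit-trace $m$-particle reduced density matrix is $\rho^{(m)}=\binom{N}{m}^{-1}P_m$, where $P_m$ is the orthogonal projection onto $\wedge^m V$. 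First I would record the two structural facts I need: that $\rho^{(m)}$ is maximally mixed on a space of dimension $\binom{N}{m}$, so $S(\rho^{(m)})=\ln\binom{N}{m}$, and that these matrices are consistent under partial traces, $\tr_m \rho^{(m)}=\rho^{(m-1)}$.

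Next, fix an integer $k$ with $2\le k\le N$ and take $\rho_{123}:=\rho^{(k)}$, regarding particle $1$ and particle $2$ as the systems $\H_1,\H_2$ and the remaining $k-2$ particles as the conditioning system $3$. By consistency this is a legitimate extension, $\tr_3\rho_{123}=\rho^{(2)}=\rho_{12}$. The reduced state on $\{1\}\cup\{3\}$ (respectively $\{2\}\cup\{3\}$) is the $(k-1)$-particle matrix $\rho^{(k-1)}$, the state on $3$ alone is $\rho^{(k-2)}$, and the state on $\{1,2,3\}$ is $\rho^{(k)}$. Using $S(\rho^{(m)})=\ln\binom{N}{m}$ this gives
\[
I(1:2\,|\,3)=2\ln\binom{N}{k-1}-\ln\binom{N}{k}-\ln\binom{N}{k-2}=\ln\frac{k(N-k+2)}{(k-1)(N-k+1)},
\]
where the last equality uses the elementary identities $\binom{N}{k-1}/\binom{N}{k}=k/(N-k+1)$ and $\binom{N}{k-1}/\binom{N}{k-2}=(N-k+2)/(k-1)$.

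It then remains only to minimize the right-hand side over $k$. The factor $\tfrac{k}{k-1}$ decreases and the factor $\tfrac{N-k+2}{N-k+1}$ increases in $k$, and the product is symmetric under $k\mapsto N-k+2$, so the minimum sits at $k=\tfrac N2+1$. For even $N$ this is an integer and yields $I(1:2|3)=2\ln\frac{N+2}{N}$, whence $\es(\rho_{12})\le\ln\frac{N+2}{N}$; this already implies the stated bound $\tfrac12\ln\frac{N+2}{N-2}$, since $\big(\frac{N+2}{N}\big)^2\le\frac{N+2}{N-2}$ is equivalent to $(N+2)(N-2)\le N^2$. For odd $N$ the nearest integers $k=\tfrac{N\pm 1}{2}$ are equidistant from the optimum and give the common value $I(1:2|3)=\ln\frac{N+3}{N-1}$, i.e.\ $\es(\rho_{12})\le\tfrac12\ln\frac{N+3}{N-1}$, exactly the claimed odd-$N$ bound.

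I expect the only genuinely delicate points to be the two structural facts in the first step — the maximal mixedness (hence the entropy value) of $\rho^{(m)}$ and the partial-trace consistency that legitimizes $\rho^{(k)}$ as an extension — both standard properties of Slater determinants, but which must be stated carefully so that the reduced states entering $I(1:2|3)$ are correctly identified with $\rho^{(k-1)}$ and $\rho^{(k-2)}$. After that the argument is the purely algebraic collapse of the conditional mutual information to a ratio of binomial coefficients together with a one-variable minimization; the conceptual content is simply that the most informative side system is \emph{half} of the remaining fermions, which is why the optimal conditioning uses roughly $N/2$ of the $N-2$ spectator particles.
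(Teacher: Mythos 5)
Your proposal is correct and follows essentially the same route as the paper: take $\rho_{123}$ to be the $k$-particle reduced density matrix of the same Slater determinant, use $S(\rho^{(m)})=\ln\binom{N}{m}$ to reduce the conditional mutual information to a ratio of binomial coefficients, and optimize over $k\approx N/2$. Your algebra is in fact the more careful version --- the paper's displayed intermediate expression $\ln(k/(N-k))+\ln((N-k+1)/(k-1))$ contains an off-by-one slip relative to the correct $\ln\bigl(k(N-k+2)/((k-1)(N-k+1))\bigr)$ --- and your choice $k=N/2+1$ for even $N$ yields a marginally stronger bound that, as you verify, implies the stated one.
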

This shows that the squashed entanglement can be {\em much  smaller} than
the entanglement of formation. 
\begin{conjecture}
\label{c5}
Inequality \eqref{squash} is 
actually an equality. Moreover, it gives the lowest possible $\es$
among all
fermionic 2-particle reduced density matrices.
\end{conjecture}

The fact that $\es$ is so small for a large $N$ fermionic state indicates that the squashed entanglement
may already be a good measure of fermionic entanglement, without any further subtraction for large $N$.

For Slaters the entanglement of formation and the squashed entanglement are
very different, but the following calculation for the Yang state shows that
this need not always be so. 

\begin {thm}[Entropy and entanglement of the pairing state] \label{pairing}
Let $M=2m$ and $N=2n$. Let  $\gamma_2$ be the two-particle reduced density
matrix of the 
N-particle pairing state  built on  $\H= \C^M$. 
Its fermionic entanglement of formation is 
\[
\ef(\gamma_2) - \ln(2)  =   \frac{(m-n) }{(2n-1)(m-1)}[\ln(m) - \ln(2)],
\]
and the squashed entanglement is bounded by
\[
\es(\gamma_2)  \leq  \frac{(m-n) }{(2n-1)(m-1)}\ln(m) + \left(1 -
\frac{(m-n) }{(2n-1)(m-1)}\right)  \ln\left(\tfrac{m+1}{m}\right)\
.
\]
\end {thm}
Note that for $M \gg N \gg 1$, 
the leading term is $\ef(\gamma_2) \asymp  \tfrac{\ln M}{N} $, and the
bound on $\es(\gamma_2)$ 
is of the same order and is, presumably, close to optimal.

\section{Proofs of Theorems}
\label{proofthms}
While Theorem \ref{mutual2} refers to fermionic states, we shall deduce
it from the following theorem in which {\em no assumption about statistics} is
made.
It is a quantitative version of {\it subadditivity of the von Neumann
entropy}, for {\em general} bipartite (and $N$-partite) states, 
which we have not seen
before and might be useful in other cases. The method of proof of this
theorem also yields quantitative remainder terms for other entropy
inequalities -- which were discussed in \cite{clb}.

\begin{thm}[Quantitative subadditivity]\label{quant}
Let $\rho_{12}$ be a density matrix on a bipartite Hilbert space $\H_1\otimes\H_2$, and let $\rho_1$ and $\rho_2$ be its reduced density matrices on $\H_1$ and $\H_2$, respectively. 
\begin{equation}\label{1state}
S(\rho_{12}) - S(\rho_1) -S(\rho_2) \leq 2\ln\left(1 - \tfrac12 \tr
\left[\sqrt{\rho_{12}}-\sqrt{\rho_1\otimes \rho_2}\right]^2\right).
\end{equation}
In particular, $S(\rho_1) +S(\rho_2) - S(\rho_{12}) \geq 0$ with equality if and only if 
$\rho_{12} = \rho_1\otimes \rho_2$. 

More generally, with an obvious notation, if 
$\rho_{1\cdots N}$ is a density matrix on $\H_1\otimes \cdots \otimes \H_N$
then  

\begin{equation} \label{nstate}
S(\rho_{1\cdots N}) -\sum_{j=1}^N S(\rho_j)
\leq 2\ln\left(1 - \tfrac12\tr 
\left[ \sqrt{\rho_{1\cdots N} } - \sqrt{\rho_1 \otimes \cdots \otimes
\rho_N}\right]^2\right).
\end{equation}
\end{thm}

\begin{proof}
Recall the Peierls--Bogoliubov inequality: If $H$ and $A$ are self adjoint
operators and $\tr e^{-H} =1$, then 
$$\tr\left(e^{-H+A}\right) \geq e^{\tr Ae^{-H}}\ .$$
To prove (\ref{1state}), apply this with 
$$H = - \log \rho_{12} \qquad{\rm and}\qquad A = \tfrac12(\log \rho_1 +
\log\rho_2 - \log\rho_{12})\ .$$
Then with $\Delta:= \tfrac12 (S(\rho_{12}) - S(\rho_1) -S(\rho_2))$, by the
Peierls--Bogoliubov
inequality and the Golden--Thompson inequality,
\begin{eqnarray}
e^\Delta &=& \exp \left [\tr\rho_{12}\tfrac12 (\log \rho_1 + \log\rho_2 -
\log\rho_{12})\right]\nonumber\\
&\leq &  \tr \exp\left[ \tfrac12 (\log \rho_{12} +  \log(\rho_1\otimes
\rho_2))\right]\nonumber\\
&\leq &  \tr \exp\left[ \tfrac12 \log \rho_{12}  \right]\exp\left[\tfrac12 
\log(\rho_1\otimes \rho_2)\right]\nonumber\\
&=& \tr \left[ \rho_{12}^{1/2} (\rho_1\otimes \rho_2)^{1/2} \right]\ .\nonumber
\end{eqnarray}
Since
\begin{equation}\label{proj4}
\tr \left[ \rho_{12}^{1/2} (\rho_1\otimes \rho_2)^{1/2} \right]  = 
\left(1 - 
\tfrac12\tr \left[ \rho_{12}^{1/2} -(\rho_1\otimes \rho_2)^{1/2} \right]^2\right)\ ,
\end{equation}
this proves (\ref{1state}). An obvious adaptation proves  (\ref{nstate}). 
\end{proof}

\begin{proof}[{\bf Proof of Theorem~\ref{mutual2}}]
By the hypothesis on $\gamma_2$, all of its eigenfunctions with non-zero 
eigenvalues are antisymmetric, and $\rho_1 = \rho_2=\gamma_1$ in Theorem \ref{quant}. Therefore,
$$\tr\left(
\sqrt{\gamma_{2}} \sqrt{\gamma_1\otimes \gamma_1}\right) = \tr\left(
\sqrt{\gamma_{2}} \left[ P_{{\rm fer}}\sqrt{\gamma_1\otimes \gamma_1}P_{{\rm
fer}}\right]\right)\ ,$$
where  $P_{{\rm fer}}$ is the orthogonal projection on the antisymmetric
subspace of $\H\otimes \H$, and then by the Cauchy--Schwarz inequality,
\begin{equation}\label{proj3}
\tr\left(\sqrt{\gamma_{2}} \sqrt{\gamma_1\otimes \gamma_1}\right)    = \tr\left(P_{{\rm fer}}\sqrt{\gamma_{2}} P_{{\rm fer}}\sqrt{\gamma_1\otimes \gamma_1}\right) 
\leq\left(\tr \left[ P_{{\rm fer}}\gamma_1\otimes \gamma_1P_{{\rm
fer}}\right]\right)^{1/2}\ .
\end{equation}
Let $\sum_{j} \lambda_j |u_j\rangle\langle u_j|$ denote
the spectral decomposition of $\gamma_1$.   Then
\begin{equation}\label{proj1}
P_{{\rm fer}}\gamma_1\otimes \gamma_1P_{{\rm fer}}  = \tfrac12 \sum_{i < j} 
\lambda_i\lambda_j |u_i\wedge u_j\rangle \langle u_i\wedge u_j|,
\end{equation}
where $u_i\wedge u_j = u_i\otimes u_j - u_j\otimes u_i$ is a vector of
length $\sqrt{2}$, hence the factor of $\tfrac12$.
Thus,
$$\tr\left[ P_{{\rm fer}}\gamma_1\otimes \gamma_1P_{{\rm fer}}\right] = \sum_{i
< j}\lambda_i\lambda_j 
=  \tfrac12(1 - \tr\gamma_1^2)\ .$$
Combining this with (\ref{1state}), (\ref{proj4}) and  (\ref{proj3}), we
obtain
\begin{equation}\label{subadd2}
S(\gamma_2) \leq 2S(\gamma_1) - \ln\left(\frac{2}{1 - \tr\gamma_1^2}\right)\ .
\end{equation}

The fact that there is equality when $\gamma_{2}$ is the reduced $2$-particle density matrix of an $N$-particle Slater has
been discussed below the statement of the theorem.

Moreover, whenever there is equality in (\ref{subadd2}) , there must be
equality in (\ref{proj3}), in which case for some constant
$C$, $\sqrt{\gamma_{2}} = CP_{{\rm fer}}\sqrt{\gamma_1\otimes \gamma_1}P_{{\rm
fer}}$, or, what is the same thing by (\ref{proj1})
$$\gamma_2  = \frac{1}{2}C^2 \sum_{i< j} \lambda_i\lambda_j  |u_i\wedge u_j\rangle
\langle u_i\wedge u_j|\ .$$
Taking the partial trace $\tr_2$ of both sides, we obtain
${\displaystyle \gamma_1 = \frac12 C^2 \sum_j \lambda_j(1-\lambda_j)
|u_j\rangle\langle u_j|}$, from which we conclude
that $\frac{1}{2}C^2(1-\lambda_j) = 1$ for each $j$. This means that $\gamma_1$ is a
normalized projection, and that $\gamma_2$
is the $2$-particle reduced density matrix of a pure Slater determinant.

The inequality \eqref{subedn} follows from \eqref{nstate} in the same way
that  \eqref{subadd1} follows from (\ref{1state}). 
The equation \eqref{power} for $e_N$ in terms of power sums 
(using $p_1=\tr \gamma_1 =1$) is  well known.
\end{proof}

\begin{proof}[{\bf Proof of Theorem~\ref{sqthm}}]
Let $\Psi$ be an $N$-particle Slater determinant. We choose 
$\rho_{123} $ to be the $K$-particle reduced density matrix
with $K\geq 2$. Thus, $\H_3$ is the $(K-2)$-particle fermionic
space, which has dimension $\tbinom{N}{K-2}$. We compute as follows:
$$S(\rho_{123}) = \ln \tbinom{N}{K}, \ S(\rho_3) =  \ln \tbinom{N}{K-2}, \
S(\rho_{13}) = S(\rho_{23}) = \ln \tbinom{N}{K-1}.
$$
Thus, 
$$
-S(\rho_{123}) - S(\rho_3) + S(\rho_{13}) + S(\rho_{23}) = \ln [K/(N-K+1)]+ \ln [ (N-K+2)/ (K-1)],
$$
and the theorem is proved by choosing $K=(N+2)/2$ for $N$ even and $K= (N+1)/2$ (or $K = (N+3)/2$)
for $N$ odd. 
\end{proof}
\bigskip
\bigskip

\begin{proof}[\bf Proof of Theorem \ref{pairing}]
Proposition \ref{Ys} and its proof can be used to compute $\ef(\gamma_2)$ and to
estimate $\es(\gamma_2)$.
Let $\alpha$ denote the completely antisymmetric state on $\H\otimes \H$;
i.e., $\alpha = (m(2m-1))^{-1}P_{\H\wedge \H}$. 
Then we may write (\ref{form3}) as
\[
\gamma_2 =   \frac{(m-n) }{(2n-1)(m-1)} |\chi\rangle\langle \chi| +  \left(
1 - \frac{(m-n) }{(2n-1)(m-1)} \right) \alpha\ .
\]
In any decomposition $\gamma_2 = \sum_{j=1}^n\lambda_j \omega^j$, one of
the $\omega_j$ must be $|\chi\rangle\langle \chi|$
and the corresponding $\lambda_k$ must be at least $(m-n)/(2n-1)(m-1)$.   Since $S(\tr_1|\chi\rangle\langle \chi|) = \ln m$,
it follows from these computations and Theorem~\ref{entform1} that
$$\ef(\gamma_2) - \ln(2)  =   \frac{(m-n) }{(2n-1)(m-1)}[\ln(m) - \ln(2)] \
.$$

Since $\es(|\chi\rangle\langle \chi|) = \ln(m)$ it follows from the
convexity of squashed entanglement \cite{CW1} and Theorem~\ref{sqthm} that
$$\es(\gamma_2)  \leq  \frac{(m-n) }{(2n-1)(m-1)}\ln(m) + \left(1 -
\frac{(m-n) }{(2n-1)(m-1)}\right)
 \ln\left(\tfrac{m+1}{m}\right)\ .$$

Note that for $M \gg N \gg 1$, 
the leading term is $\ef(\gamma_2) \asymp  \tfrac{\ln M}{N} $, and the
upper bound on $\es(\gamma_2)$ 
is of the same order and is, presumably,  close to optimal.  
\end{proof}

\medskip
\noindent {\bf Acknowledgements:} We thank U.~Marzolino and J.~Schliemann
for helpful correspondence.  We thank the  U.S.
National Science
Foundation
for support  by grants DMS 1501007. (E.A.C.) and PHY 0965859  and PHY-1265118
(E.H.L.) and the European Research Council for support by ERC Grant Agreement Nos. 321029 and 337603 (R.R.).

\end{document}